\theoremstyle{plain}
\newtheorem{theorem}{Theorem}[section]
\newtheorem*{theorem*}{Theorem}
\newtheorem{lemma}[theorem]{Lemma}
\newtheorem{remark}[theorem]{\bf Remark}
\newtheorem{proposition}[theorem]{\bf Proposition}
\newcommand \IL{\mathcal{L}_{\omega_1 \omega}}
\newcommand \Mod{\rm{Mod}}
\newcommand \tp{\rm{tp}}
\newcommand \bPi{\bm{\Pi}}
\newcommand \KK{\mathcal{K}}
\newcommand \NN{\mathbb{N}}
\newcommand \Seq{\mathbb{N}^{<\mathbb{N}}}
\newcommand \RR{\mathbb{R}}
\newcommand \QQ{\mathbb{Q}}
\newcommand \UU{\mathbb{U}}
\newcommand \UUU{\mathcal{U}}
\newcommand{\cl}[2][]{\overline{#2}^{#1}}
\title{Isomorphism of almost locally compact Polish metric structures}
\author[M. Malicki]{Maciej Malicki}
\address{Institute of Mathematics, Polish Academy of Sciences, ul. Sniadeckich 8, Warsaw, Poland}
\email{mamalicki@gmail.com}
\keywords{equivalence relations, infinitary continuous logic, almost locally compact structures.}
\subjclass[2020]{Primary 03E15; Secondary 03C66, 03C75}
\begin{document}
\maketitle

\begin{abstract}
A topological space is almost locally compact if it contains a dense locally compact subspace.  We generalize a result from \cite{Ma}, showing that isomorphism on Borel classes of almost locally compact Polish metric structures is always classifiable by countable structures. This allows to remove a gap in the proof presented in \cite{Ma} of a positive answer to a question of Gao and Kechris, who asked whether isometry of locally compact Polish metric spaces is classifiable by countable structures. 
\end{abstract}

\section{Introduction}
One of the main results of \cite{Ma} is Theorem 4.6:

\begin{theorem*}
Let $F$ be a countable fragment, and let $T$ be an $F$-theory all of whose models are locally compact. Then the isomorphism relation on $\Mod(T)$ is classifiable by countable structures.
\end{theorem*}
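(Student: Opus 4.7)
The plan is to reduce the isomorphism relation on $\Mod(T)$ to isomorphism of countable structures by producing, for each $M \in \Mod(T)$, a Borel-definable discrete invariant. The strategy I would follow is to replace the metric structure $M$ by a countable ``rational skeleton'' that determines $M$ up to isomorphism, and then to read off this skeleton as a countable first-order structure in an auxiliary relational language $\sigma$; the assignment $c \colon \Mod(T) \to \Mod(\sigma)$ would then be the desired Borel reduction.

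First, I would use local compactness to extract, in a Borel fashion, a countable dense subset $D(M) = \{a_n(M)\}_{n<\omega}$ of $M$ together with a function $n \mapsto r_n(M)$ assigning to each $a_n(M)$ a positive rational such that the closed ball $\overline B(a_n(M), r_n(M))$ is compact. A measurable selection, implemented as a uniform search for finite $\epsilon$-nets witnessing compactness, should make this choice Borel once local compactness of $M$ is assumed.

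Next, I would encode $M$ as a countable relational structure $c(M)$ on universe $\NN$ whose predicates record rational approximations of all the data of $M$ at tuples from the skeleton: pairwise distances, values of the function symbols of $F$, truth values of the predicate symbols of $F$, and the compactness radii $r_n(M)$. One direction of the equivalence $M \cong M' \iff c(M) \cong c(M')$ is immediate from the isomorphism-invariance of the extracted data. For the converse, a back-and-forth argument should extend any isomorphism of the skeletons to an isomorphism of the completions: compactness of $\overline B(a_n, r_n)$ lets one match the two balls via finite $\epsilon$-nets, and because these balls cover the structure by density, the resulting isometry is surjective and structure-preserving.

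The main obstacle is making the skeleton extraction and the coding uniform in $M$, i.e., showing that $M \mapsto c(M)$ is genuinely a Borel map and not merely pointwise well-defined. Local compactness of a Polish metric space is not in general a Borel condition, so the hypothesis that every member of $\Mod(T)$ is locally compact must be leveraged to produce a uniform Borel search procedure for the compactness radii and a uniform back-and-forth. I expect this is precisely the subtle point where the gap in \cite{Ma} arises, and where the ``almost locally compact'' framework developed in the present paper is designed to supply the missing uniform machinery.
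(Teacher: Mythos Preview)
Your proposal has a genuine gap precisely in the direction you call ``immediate'': $M \cong M'$ does \emph{not} in general imply $c(M) \cong c(M')$ for the encoding you describe. A Borel-selected dense skeleton is not an isomorphism invariant of the completed structure, and two countable dense subsets of the same Polish metric space need not be isometric. Concretely, in the trivial signature take two codes $M, M'$ for $[0,1]$ whose underlying dense sets are $D_1 = \QQ \cap [0,1]$ and $D_2 = (D_1 \setminus \{1/2\}) \cup \{1/\sqrt{2}\}$; any isometry $D_1 \to D_2$ would extend to an isometry of $[0,1]$, hence be the identity or $x \mapsto 1-x$, neither of which carries $D_1$ onto $D_2$. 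Thus $c(M) \not\cong c(M')$ although $M \cong M'$. (By contrast, your ``hard'' direction $c(M) \cong c(M') \Rightarrow M \cong M'$ is in fact the easy one and needs neither back-and-forth nor local compactness: an isomorphism of skeletons extends to the completions by continuity.)

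The paper avoids this obstacle by building the invariant from data in the type spaces $S_n(T)$, which are canonical and do not depend on any choice of skeleton. The elements of $C_M$ are compact subsets $\cl[\tau]{B_\epsilon(\tp(\bar a)) \cap U}$ of $S_n(T)$ together with bookkeeping data; since types are isomorphism-invariant, $M \cong N$ yields $C_M = C_N$ literally. The converse is handled not by a back-and-forth on points but by the $T^\alpha$ hierarchy and the AE-family machinery (Proposition~\ref{pr:AET} and Theorem~\ref{th:isoTalpha}): an isomorphism $C_M \cong C_N$ forces $T^\alpha(M) = T^\alpha(N)$ for all $\alpha$, whence $M \cong N$ once $[M]$ is Borel. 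Finally, you misidentify the gap in \cite{Ma}: it is not about Borelness of a skeleton extraction on $\Mod(T)$ (that theorem is correct as stated), but about the fact that $\mathcal{LC}$ is not Borel in $\KK(\UU)$, which obstructs the application to isometry of locally compact spaces and is repaired here by passing to the Borel class $\mathcal{ALC}$.
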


In this note, we point out that the assumption of local compactness can be relaxed to almost local compactness. Recall that a topological space is called \emph{almost locally compact} if it contains a dense locally compact subspace.  In particular, this generalization fixes a gap in the proof presented in \cite{Ma} of a positive answer to the problem posed by Gao and Kechris whether the isometry relation $\cong$ on the space $\mathcal{LC}$ of locally compact Polish metric spaces is classifiable by countable structures. Here, $\mathcal{LC}$ is understood as subset of the standard Borel space $\KK(\UU)$ of closed subspaces of the Urysohn space $\UU$.  A meaningful positive answer to this question requires finding a standard Borel space $X$, and an equivalence relation $E$ on $X$ such that $\mathcal{LC}$ is a subset of $X$, $E$ restricted to $X$ is $\cong$, and $E$ is classifiable by countable structures. In the proof, a wrong claim was made that $\mathcal{LC}$ itself is Borel in $\KK(\UU)$. In fact, it is complete co-analytic (see \cite{GaKe}). We show that one can take as $X$ the standard Borel space $\mathcal{ALC} \subseteq \KK(\UU)$ of closed almost locally compact subspaces of $\UU$, and the isometry relation on $\mathcal{ALC}$ as $E$.

\section{AE families}
For a Polish metric structure $M$ (see \cite{Ma} for undefined notions and notation used in this note) with metric $d$, we extend $d$ to the set $M^{<\omega}$ of tuples in $M$ by putting, for $m,n < \omega$, $\bar{a} \in M^m$, $\bar{b} \in N^n$,
\begin{equation*}
	d(\bar a, \bar b) = \max \{d(a_i, b_i): i<\min(m,n) \}.
\end{equation*}
In particular, the open ball $B^{M^{<\omega}}_r(\emptyset)$ is equal to $M^{<\omega}$. Also, for $x \in M^n$, the symbols $B_{r}(x)=B_{<r}(x)$, and $B_{\leq r}(x)$ always denote $d$-balls in $M^n$, while for $x \in S_n(T)$, they denote $\partial$-balls in  $S_n(T)$.  
 
Let $F$ be a fixed fragment in signature $L$, and let $\bar{x}$ be a tuple of free variables. An $(-1)$-AE family $P(\bar{x})$ is a formula $\phi(\bar{x})$ in $F$. Provided that $\gamma$-AE families have been defined for $\gamma<\beta$, where $\beta=0$ or $\beta$ is a limit ordinal, a $\beta$-AE family $P(\bar{x})$ is a collection of $\gamma$-AE families $p_k(\bar{x})$, $k \in \NN$, $\gamma<\beta$, a $(\beta+1)$-AE family $P(\bar{x})$ is a collection of $\gamma$-AE families $p_{k,l}(\bar{x}_{k,l})$, $\gamma<\beta$, $k,l \in \NN$, $\bar{x} \subseteq \bar{x}_{k,l}$, and a $(\beta+n)$-AE family $P(\bar{x})$, $2 \leq n < \omega$, is a collection of $(\beta+n-2)$-AE families $p_{k,l}(\bar{x}_{k,l})$, $k,l \in \NN$, $\bar{x} \subseteq \bar{x}_{k,l}$. Moreover, every $\alpha$-AE family $P(\bar{x})=\{p_{k,l}(\bar{x}_{k,l})\}$, $\alpha \geq 1$, comes equipped with a fixed $u_P \geq 0$ such that $u_P \geq u_{p_{k,l}}$, $k,l \in \NN$.

Let $\bar{a}$ be an $n$-tuple in a Polish metric structure $M$. We say that $\bar{a}$ realizes a $(-1)$-AE family $P(\bar{x})=\phi(\bar{a})$ if $\phi^M(\bar{a})=0$, and $\bar{a}$ realizes a $\beta$-AE family $P(\bar{x})$, where $\beta=0$ or $\beta$ is a limit ordinal, if it realizes every $p(\bar{x}) \in P(\bar{x})$. Finally, $\bar{a}$ realizes a $(\beta+n)$-AE family $P(\bar{x})=\{p_{k,l}(\bar{x}_{k,l})\}$, $1 \leq n< \omega$, if it holds in $M$ that 
\[ \forall \bar{b} \in B^{M^{<\omega}}_{u_P}(\bar{a}) \forall v>0 \forall k \exists \bar{c} \in B^{M^{<\omega}}_{v}(\bar{b}) \exists l  \, (\bar{c} \mbox{ realizes } p_{k,l}(\bar{x}_{k,l}) \mbox{ in } M ). \]
If $\emptyset$ in $M$ realizes $P(\emptyset)$, we say that $M$ models $P$.

Let 
\[	\rho_M(\bar{a}) = \sup \{r \in \RR : \cl{B_r(\bar{a})} \text{ is compact} \};\]
we write $\rho(\bar{a})$ instead of $\rho_M(\bar{a})$, when $M$ is clear from the context. Recall that every $M \in \Mod(L)$ is a code for a Polish metric structure, also denoted by $M$, that consists of $\Seq$ and relations on $\Seq$. Thus, we always assume that Polish metric structures contain $\Seq$ as a (tail)-dense subset. Now, Remark 3.1 in \cite{Ma} can be obviously strengthened as follows.

\begin{remark}
\label{re:Smallv}
Note that in order to verify that $\bar{a}$ realizes $P(\bar{x})$, it suffices to check that the above condition holds for some dense set of $\bar{b} \in B^{M^{<\omega}}_{u_P}(\bar{a})$, and for all sufficiently small $v>0$. In particular, one can assume that $\bar{b} \in \Seq$, and, if $M$ is almost locally compact, that $\rho_M(\bar{b})>0$.   
\end{remark}

The crucial application of AE families in \cite{Ma} is the following result (Corollary 3.3):

\begin{theorem}
	\label{co:IsoAEAlpha}
	Let $F$ be fragment in signature $L$, and let $1 \leq \alpha<\omega_1$. Suppose that $[M] \in \bPi^0_{1+\alpha}(t_F)$ for some $M \in \Mod(L)$. There exists an $\alpha$-AE family $P_M$ such that
	\[  [M]= \{ N \in \Mod(L) : N \mbox{ models } P_M \}. \]
\end{theorem}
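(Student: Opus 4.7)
The proof is by transfinite induction on $\alpha$, in the spirit of the Lopez--Escobar/Scott analysis for infinitary logic, with $\gamma$-AE families playing the role of formulas of quantifier rank $\gamma$. The goal is to build, uniformly in tuples $\bar{a}$ in $M$, a $\gamma$-AE family $P^{\gamma}_{\bar{a}}(\bar{x})$ whose realizers in $N$ are the tuples $\bar{a}'$ that match $\bar{a}$ up to rank $\gamma$; the required family is then $P_{M} = P^{\alpha}_{\emptyset}$.

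\textbf{Base case, $\alpha = 1$.} Here $[M] \in \bPi^{0}_{2}(t_{F})$ is a countable intersection $\bigcap_{k} U_{k}$ of $t_{F}$-open sets. Each $U_{k}$ unfolds as a countable union $\bigcup_{l}\{N : \exists \bar{c} \in N^{<\omega},\ \phi^{N}_{k,l}(\bar{c}) = 0\}$ for suitable $\phi_{k,l} \in F$, using that $F$ generates $t_{F}$. Packaging the $\phi_{k,l}$ as a $1$-AE family with $u_{P}=0$ and $\bar{x}=\emptyset$, the realization condition at the empty tuple unwinds to: for every $k$ there exist $\bar{c}$ in $N$ and $l$ with $\phi^{N}_{k,l}(\bar{c}) = 0$, equivalent to $N \in \bigcap_{k} U_{k} = [M]$.

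\textbf{Limit and successor steps.} At a limit $\alpha$, use $\bPi^{0}_{1+\alpha} = \bigcap_{\gamma<\alpha} \bPi^{0}_{1+\gamma}$ to write $[M] = \bigcap_{\gamma<\alpha} C_{\gamma}$ with $C_{\gamma} \in \bPi^{0}_{1+\gamma}$ and $C_{\gamma} \supseteq [M]$; induction produces a $\gamma$-AE family $p_{\gamma}$ for each $C_\gamma$, and these combine into $P_{M}$ via the limit clause of the definition. At $\alpha = \beta + n$ with $n \geq 1$, write $[M] = \bigcap_{k}\bigcup_{l} D_{k,l}$ with each $D_{k,l}$ two (or one, at the first successor) levels lower in the Borel hierarchy, apply the induction hypothesis to obtain lower-rank AE families $p_{k,l}(\bar{x}_{k,l})$ defining each $D_{k,l}$, and assemble these into $P_{M}$, choosing a common $u_{P_{M}}$ that dominates all $u_{p_{k,l}}$.

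\textbf{Main obstacle.} The delicate point is reconciling the $\forall \bar{b} \in B^{M^{<\omega}}_{u_{P}}(\bar{a})\ \forall v > 0\ \exists \bar{c} \in B^{M^{<\omega}}_{v}(\bar{b})$ density condition in AE-realization with the plain $\bigcap\bigcup$ structure of a Borel set: realization demands witnesses in every small ball around every $\bar{b}$, so the inductive assembly must produce $d$-dense realizers, not just a single existential witness. This is precisely where Remark \ref{re:Smallv} enters: by restricting the quantifier over $\bar{b}$ to $\bar{b} \in \Seq$ (and, when relevant, to $\bar{b}$ with $\rho_{M}(\bar{b})>0$), the verification reduces to countably many instances, which is what lets the construction yield a bona fide $\alpha$-AE family from purely topological Borel data. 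Maintaining $u_{P} \geq u_{p_{k,l}}$ uniformly along the induction is an additional bookkeeping constraint, but it is essentially mechanical once the witnesses are chosen within the appropriate radii.
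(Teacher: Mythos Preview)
The paper does not give a proof of this theorem: it is quoted verbatim as Corollary~3.3 of \cite{Ma} and used as a black box. So there is no in-paper argument to compare your sketch against.

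On its own merits, your sketch has two genuine gaps. First, the induction hypothesis you set up is about isomorphism classes $[M]$, but in both the limit and the successor step you decompose $[M]$ into pieces $C_\gamma$ or $D_{k,l}$ that are merely $\bPi^0_{1+\gamma}$ (respectively, lower-level Borel) sets containing $[M]$; these are not themselves isomorphism classes, so the induction hypothesis does not apply to them. Strengthening the hypothesis to arbitrary $\bPi^0_{1+\gamma}$ sets is not an option, because realization of an AE family is isomorphism-invariant while a generic Borel set is not.

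Second, and more seriously, you correctly flag the density issue but do not resolve it. Remark~\ref{re:Smallv} only says that in verifying realization one may restrict the universal quantifier over $\bar b$ to a countable dense set; it says nothing about why, given a single existential witness $\bar c$ somewhere in $N$, there should be witnesses inside $B_v(\bar b)$ for \emph{every} $\bar b$ and every $v>0$. Passing from ``$\exists \bar c$'' to ``witnesses are dense'' is exactly where the isomorphism-invariance of $[M]$ must enter, and your argument never invokes it. The route taken in \cite{Ma} is essentially the one you allude to in your opening paragraph but then abandon: one builds the families $P^\gamma_{\bar a}$ directly from back-and-forth data on $M$ (a Scott-type construction), rather than from a Borel decomposition of the set $[M]$, and then shows that $[M]\in\bPi^0_{1+\alpha}(t_F)$ forces the rank-$\alpha$ family to pin down $[M]$. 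Your base and inductive steps attempt the latter, topological, approach, and that approach does not go through as written.
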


\section{Almost locally compact structures}
Let $F$ be a fragment, let $T$ be an $F$-theory, and let $M \in \Mod(T)$ be almost locally compact. For $n \in \NN$, define 
\[ \Theta_n(M)=\{\tp_F(\bar{a}) \in S_n(T): \bar{a} \in M^n\}.\]
The following fact, proved in \cite{HaMaTs} for locally compact $M$ (Lemma 6.2), holds with the same proof for almost locally compact $M$.

\begin{lemma}
	\label{l:loc-cpct-type-space}
	Let $\Phi \colon (M^n, d) \to (S_n(T), \partial)$ be defined by $\Phi(\bar a) = \tp (\bar a)$. Then the following hold:
	\begin{enumerate}
		\item \label{i:p:loc-cpct-type-space:1} $\Phi$ is a contraction for the metrics $d$ on $M^n$ and $\partial$ on $S_n(T)$.
		\item \label{i:p:loc-cpct-type-space:2} If $r < \rho(\bar a)$, then $\Phi(B_{\leq r}(\bar a)) = B_{\leq r}(\Phi(a))$. In particular, \\ $B_{\leq r}(\tp (\bar a)) \subseteq \Theta_n(M)$, $B_{\leq r}(\tp (\bar a))$ is $\partial$-compact, and $\tau_n$- and $\partial$-topology coincide on $B_{\leq r}(\tp (\bar a))$.
		\item \label{i:p:loc-cpct-type-space:3} If $r \leq \rho(\bar a)$, then $\Phi(B_r(\bar a)) = B_r(\Phi(\bar a))$.
	\end{enumerate}
\end{lemma}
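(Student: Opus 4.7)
The plan is to reproduce the proof of Lemma 6.2 from \cite{HaMaTs}, observing that its only use of local compactness is to ensure $d$-compactness of $\overline{B_r(\bar a)}$ whenever $r < \rho_M(\bar a)$---a property that still holds in the almost locally compact setting, since $\rho_M(\bar a)$ is defined pointwise and makes no reference to the local structure of $M$ elsewhere.

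For (1), the tuples $\bar a$ and $\bar b$ are themselves realizations of $\tp(\bar a)$ and $\tp(\bar b)$ in the same structure $M$ at $d$-distance $d(\bar a,\bar b)$, so the definition of $\partial$ immediately gives $\partial(\tp(\bar a),\tp(\bar b)) \leq d(\bar a,\bar b)$.

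The main content is (2). The inclusion $\Phi(B_{\leq r}(\bar a)) \subseteq B_{\leq r}(\Phi(\bar a))$ is immediate from (1). For the reverse, I set $K = \overline{B_r(\bar a)}$, which is $d$-compact because $r < \rho(\bar a)$. Then $\Phi(K)$ is $\partial$-compact in $S_n(T)$ as the continuous image of $K$ under the contraction $\Phi$. To show $\Phi(K) \supseteq B_{\leq r}(\tp(\bar a))$, I would use the standard continuous-logic realization argument: given $q$ with $\partial(q,\tp(\bar a)) \leq r$ and a finite set of $F$-formulas $\phi_1,\dots,\phi_m$ describing $q$ up to error $\epsilon$, the condition asserting the existence of $\bar y$ with $d(\bar y,\bar a) \leq r+\epsilon$ and $|\phi_i(\bar y) - \phi_i^q| \leq \epsilon$ for all $i$ is an infimum-statement in the complete theory of $M$ which holds in some elementary extension of $M$, hence in $M$ itself. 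This produces approximating tuples $\bar b_{\epsilon,\bar\phi} \in K$ whose types are $\epsilon$-close to $q$; extracting a convergent subsequence in the compact $K$ and passing to the limit yields $\bar b \in K$ with $\tp(\bar b) = q$. The remaining claims---$\partial$-compactness of $B_{\leq r}(\tp(\bar a))$, its inclusion in $\Theta_n(M)$, and the coincidence of $\tau_n$ and $\partial$ on it---then follow from the identification $B_{\leq r}(\tp(\bar a)) = \Phi(K)$, together with the general fact that $\tau_n$ and $\partial$ agree on any $\tau_n$-compact metric subset of $S_n(T)$.

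Part (3) reduces to (2) by writing $B_r(\bar a) = \bigcup_{r' < r} B_{\leq r'}(\bar a)$, choosing $r' < \rho(\bar a)$ arbitrarily close to $r$, and doing the same on the type side. The main conceptual obstacle is the realization step in (2); but once the pointwise compactness of $K$ is in hand, this is just the standard transfer between $M$ and an elementary extension, which does not require ambient local compactness of $M$. Hence the verbatim proof of \cite{HaMaTs}, Lemma 6.2 goes through.
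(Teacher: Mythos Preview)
Your proposal is correct and matches the paper's approach exactly: the paper does not give an independent proof but simply states that the argument of \cite[Lemma~6.2]{HaMaTs} goes through verbatim for almost locally compact $M$, which is precisely what you do (with the added service of sketching that argument). Your key observation---that the only use of local compactness is the pointwise compactness of $\overline{B_r(\bar a)}$ for $r<\rho(\bar a)$, which is built into the definition of $\rho$ and independent of ambient local compactness---is the content of the paper's one-line justification.
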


For each logic topology $\tau_n$ on $S_n(T)$ fix a countable basis $\UUU_n=\{U_{l,n}\}$ containing $\emptyset$ and the whole space, and put $\UUU=\bigcup_n \UUU_n$. Let $U \in \UUU_n$, $\epsilon>0$, and let $\bar{a}$ be an $n$-tuple in $M$ with $\rho(\bar{a})>0$. We say that $(U,\epsilon)$ is \emph{$\bar{a}$-good} in $M$ if
\begin{itemize}
	\item $\tp(\bar{a}) \in U$,
	\item $2\epsilon<\rho(\bar{a})$,
	\item there is $\delta>0$ such that $U \cap B_{2\epsilon}(\tp(\bar{a})) \subseteq B_{\epsilon-\delta}(\tp(\bar{a}))$. 
\end{itemize}

\begin{remark}
\label{re:good}
The following observations easily follow from the fact that $\partial$- and $\tau$- topologies coincide on compact subsets of $(S_n(T),\partial)$.
\begin{enumerate}
\item For every $\delta>0$ there exist $U \in \UUU$ and $0<\epsilon<\delta$ such that $(U,\epsilon)$ is $\bar{a}$-good,
\item if $(U,\epsilon)$ is $\bar{a}$-good, then $$\cl[\tau]{B_\epsilon(\tp(\bar{a})) \cap U } \subseteq \Theta_{|\bar{a}|}(M),$$
\item if $(U,\epsilon)$ is $\bar{a}$-good, there is $\delta>0$ such that $d(\bar{a},\bar{a}')<\delta$ implies that  $(U,\epsilon)$ is $\bar{a}'$-good, and $$U \cap B_{\epsilon}(\tp(\bar{a}))=U \cap B_{\epsilon}(\tp(\bar{a}')).$$
\end{enumerate}
\end{remark}
Now, for $\bar{a} \in \Seq$, $U \in \UUU_n$, and $\epsilon \in \QQ^+$, define
\[ T^0_{U,\epsilon}(\bar{a})=\cl[\tau]{B_\epsilon(\tp(\bar{a})) \cap U }, \]
if $(U,\epsilon)$ is $\bar{a}$-good,
\[ T^0_{U,\epsilon}(\bar{a})=\emptyset, \]
otherwise, and
\begin{multline}
T^{\alpha}_{U,\epsilon}(\bar{a})=\{ T^\beta_{U',\epsilon'}(\bar{a}'): \beta<\alpha, |\bar{a}'| \geq |\bar{a}|, U' \in \UUU_{|\bar{a}'|}, U' \upharpoonright |\bar{a}| \subseteq U, \epsilon' \in \QQ^+, \epsilon' \leq \epsilon \} 
\end{multline}
for $\alpha>0$. Also, for $u \in \QQ^+$, put
\begin{multline}
T^\alpha_u(\bar{a})=\{ T^\beta_{U,v}(\bar{b}): \beta<\alpha, \bar{b} \in B^{M^{<\omega}}_u(\bar{a}), |\bar{b}| \geq |\bar{a}|, U \in \UUU_{|\bar{b}|}, v \in \QQ^+ \}, 
\end{multline}

\[ T^\alpha(M)=T^\alpha_1(\emptyset).\]
Using Remark \ref{re:good}(3), it is straightforward to observe that

\begin{remark}
\label{re:CongSameT}
$M \cong N$ implies that $T^\alpha(M)=T^\alpha(N)$ for all $\alpha \geq 1$.
\end{remark}

The following result, proved in \cite{Ma} for locally compact $M, N$ (Proposition 4.4), holds with the same proof for almost locally compact $M, N$. We present the proof for the sake of completeness.

\begin{proposition}
\label{pr:AET}
Let $F$ be a fragment, and let $T$ be an $F$-theory. Assume that $M,N \in Mod(T)$ are almost locally compact, and  $T^\alpha_{u}(\bar{a})=T^\alpha_{u'}(\bar{a}')$ for some tuples $\bar{a}$, $\bar{a}'$ in $M$, $N$, respectively. Then every $\alpha$-AE family $P(\bar{x})$ with $u_P \leq u$ realized by $\bar{a}'$, is also realized by $\bar{a}$. 
\end{proposition}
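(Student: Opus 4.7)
The plan is transfinite induction on $\alpha$, splitting along the structural cases of the AE-family definition. The intuition is that $T^\alpha_u(\bar a)$ encodes precisely the local type-space data around $\bar a$ needed to certify realization of $\alpha$-AE families there, mirroring the $\forall\exists$ block built into the definition of those families, so the set-theoretic equality $T^\alpha_u(\bar a)=T^\alpha_{u'}(\bar a')$ should be enough to transport realization from $\bar a'$ to $\bar a$.

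When $\alpha$ is $0$ or a limit ordinal, $P=\{p_k\}$ is a collection of $\gamma_k$-AE families with $\gamma_k<\alpha$, realization of $P$ is just realization of each $p_k$, and the elements $T^\beta_{U,v}(\bar b)$ of $T^\alpha_u(\bar a)$ of differing $\beta$ are distinguishable by their depth of set nesting. Hence the hypothesized equality restricts to $T^\gamma_u(\bar a)=T^\gamma_{u'}(\bar a')$ for each $\gamma<\alpha$, and since $u_{p_k}\leq u_P\leq u$, the inductive hypothesis yields realization of each $p_k$ at $\bar a$.

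The substantive case is $\alpha=\beta+n$ with $1\leq n<\omega$, where $P=\{p_{k,l}(\bar x_{k,l})\}$ with $p_{k,l}$ a $\gamma_{k,l}$-AE family ($\gamma_{k,l}<\alpha$) and $\bar x\subseteq\bar x_{k,l}$. To show $\bar a$ realizes $P$ I fix $\bar b\in B^{M^{<\omega}}_{u_P}(\bar a)$, $v>0$, and $k$; by Remark~\ref{re:Smallv} I may take $\bar b\in\Seq$ with $\rho_M(\bar b)>0$, $|\bar b|\geq|\bar a|$, and $v$ arbitrarily small. Pick a $\bar b$-good pair $(U,\epsilon)$ with $\epsilon<v$ via Remark~\ref{re:good}(1); then $T^{\alpha-1}_{U,\epsilon}(\bar b)\in T^\alpha_u(\bar a)$, and by the equality the same object lies in $T^\alpha_{u'}(\bar a')$, yielding $\bar b'\in\Seq\cap B_{u'}(\bar a')$ and a $\bar b'$-good pair $(U',\epsilon')$ with $T^{\alpha-1}_{U',\epsilon'}(\bar b')=T^{\alpha-1}_{U,\epsilon}(\bar b)$ (goodness of $(U',\epsilon')$ comes from nonemptiness of the matched set). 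A small perturbation in $\Seq$ using Remark~\ref{re:good}(3) places $\bar b'\in B_{u_P}(\bar a')$, which uses the slack $u_P\leq u$ together with the implicit hypothesis $u_P\leq u'$ that holds in the intended application with $u=u'=1$. Realization of $P$ by $\bar a'$ at $\bar b'$, with $v'=\epsilon'$ and the given $k$, then yields $\bar c'\in B_{\epsilon'}(\bar b')$ and $l$ with $\bar c'$ realizing $p_{k,l}$ in $N$. Finally, unpacking the matched objects $T^{\alpha-1}_{U,\epsilon}(\bar b)$ and $T^{\alpha-1}_{U',\epsilon'}(\bar b')$ at the next structural level---their elements are $T^\delta$-objects over extensions of $\bar b$, $\bar b'$ respectively---and using Lemma~\ref{l:loc-cpct-type-space} to translate between $\partial$-neighborhoods in type space and metric neighborhoods in $M$, I produce an extension $\bar c\in\Seq$ of $\bar b$ with $T^{\gamma_{k,l}}_w(\bar c)=T^{\gamma_{k,l}}_{w'}(\bar c')$ for suitable $w,w'\geq u_{p_{k,l}}$. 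The inductive hypothesis applied to $p_{k,l}$ then shows that $\bar c$ realizes $p_{k,l}$ in $M$, while $\epsilon<v$ ensures $\bar c\in B_v(\bar b)$, completing the step.

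I expect the main obstacle to be this last transfer step: extracting, from the coarse set-equality of $T^{\alpha-1}_{U,\epsilon}$-objects, a specific pair $(\bar c,\bar c')$ whose attached $T^{\gamma_{k,l}}_w$-data match in the correct balls. This requires a careful dictionary between the $(U,\epsilon)$-parametrization of $T^\alpha_{U,\epsilon}$ and the $u$-parametrization of $T^\alpha_u$, built from Lemma~\ref{l:loc-cpct-type-space} and the good-pair machinery of Remark~\ref{re:good}, together with equally delicate bookkeeping of $u$, $u'$, and $u_P$.
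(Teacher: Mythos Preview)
Your outline is the right shape, and the limit case via set-rank is fine. The gap is precisely the one you name at the end, and it is a real obstruction rather than mere bookkeeping: the proposition is stated for equalities $T^\gamma_w(\bar c)=T^\gamma_{w'}(\bar c')$ of the $u$-parametrized invariants, whereas unpacking $T^{\alpha-1}_{U,\epsilon}(\bar b)=T^{\alpha-1}_{U',\epsilon'}(\bar b')$ only hands you matches of $(U,\epsilon)$-parametrized objects $T^\delta_{V,\eta}$. These two families are assembled differently---one ranges over tuples in a metric ball with no constraint on $U,\epsilon$, the other imposes $V\upharpoonright|\bar b|\subseteq U$ and $\eta\leq\epsilon$ with no metric constraint tied to $\bar b$---and there is no dictionary converting one kind of equality into the other. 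So the proposition as stated cannot serve as its own inductive hypothesis, and the ``careful dictionary'' you anticipate does not materialize.

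The paper does not induct on the proposition. It argues $\alpha=1$ and $\alpha=2$ by hand and relegates the rest to an ``easy induction'' whose content is visible in those two cases and whose implicit hypothesis is about the $(U,\epsilon)$-invariants. The key mechanism, entirely absent from your sketch, is the endgame at level~$0$. Matching $T^0_{U,v/2}(\bar b)=T^0_{U',v'}(\bar b')$ is an equality of \emph{closed subsets of the type space}; in particular $\tp(\bar b')\in\cl[\tau]{B_{v/2}(\tp(\bar b))\cap U}$, so by Remark~\ref{re:good}(2) and Lemma~\ref{l:loc-cpct-type-space} one finds $\bar d\in M$ with $\tp(\bar d)=\tp(\bar b')$ and $d(\bar b,\bar d)<v/2$. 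The condition ``some $\bar x$ in the $v/2$-ball satisfies $p_{k,l}(\bar x)=0$'' is a single formula of $\tp(\bar b')=\tp(\bar d)$, and now \emph{compactness} of $\cl{B_{v/2}(\bar d)}$ in $M$ yields an actual realizer $\bar c$. For $\alpha=2$ one descends one more layer before reaching $T^0$ and repeating this; the general induction is this descent. What your proposal is missing, then, is not better bookkeeping but this bottom-level step: type-equality transfer together with compactness of balls in the almost locally compact model.
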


\begin{proof}
Suppose that $T^1_{u}(\bar{a})=T^1_{u'}(\bar{a}')$, and fix a $1$-AE family $P(\bar{x})=\{p_{k,l}(x_{k,l})\}$ realized by $\bar{a}'$, and with $u_P \leq u$. Fix $\bar{b} \in B^{M^{<\omega}}_{u_P}(\bar{a})$, $v>0$, and $k \in \NN$. By Remarks \ref{re:Smallv} and \ref{re:good}(1), we can assume that $v<\rho(\bar{b})$, and there is $U \in \UUU$ such that $(U,v/2)$ is $\bar{b}$-good. Find $\bar{b}' \in  B^{N^{<\omega}}_{u_P}(\bar{a}')$, $U' \in \UUU$, and $v'>0$ such that $T^0_{U,v/2}(\bar{b})=T^0_{U',v'}(\bar{b}')$. As $\bar{a}'$ realizes $P(\bar{x})$, there is $\bar{c}' \in B^{N^{<\omega}}_{v/2}(\bar{b}')$, and $l$ such that $p_{k,l}^N(\bar{c}')=0$. But then 
\[ \inf_{\bar{x}} [(d(\bar{b},\bar{x}) \dotdiv v/2) \vee p_{k,l}(\bar{x})] \in \tp(\bar{b}'), \]
and, by Remark \ref{re:good}(2), there is $\bar{d} \in  B^N_{v/2}(\bar{b})$ with $\tp(\bar{d})=\tp(\bar{b}')$. By the compactness of $B^M_{v/2}(\bar{d})$, there is $\bar{c} \in B^M_{v/2}(\bar{d})$ such that $p_{k,l}^M(\bar{c})=0$. Clearly, $\bar{c} \in B^{M^{<\omega}}_v(\bar{b})$. As $\bar{b}$, $v$ and $k$ were arbitrary, this shows that $\bar{a}$ realizes $P(\bar{x})$.

Suppose now that $T^2_{u}(\bar{a})=T^2_{u'}(\bar{a}')$, and let $P(\bar{x})$ be a $2$-AE family realized by $\bar{a}'$, and with $u_P \leq u$. Fix $\bar{b} \in  B^{M^{<\omega}}_{u_P}(\bar{a})$, $v>0$, $k \in \NN$, and $U \in \UUU$ such that $(U,v/2)$ is $\bar{b}$-good. Fix $\bar{b}' \in B^{N^{<\omega}}_{u_P}(\bar{a}')$, $U' \in \UUU$ and $v'>0$ such that $T^1_{U,v/2}(\bar{b})=T^1_{U',v'}(\bar{b}')$. As $\bar{a}'$ realizes $P(\bar{x})$, there is $l$, and $\bar{c}' \in B^{N^{<\omega}}_{v/2}(\bar{b}')$ such that $\tp(\bar{c}') \upharpoonright |\bar{b}'| \in U'$, and  $\bar{c}'$ realizes $p_{k,l}(\bar{x}_{k,l})$. Fix $V,V' \in \UUU$, $0<w,w' \leq v/2$, and $\bar{d} \in B^{M^{<\omega}}_{v/2}(\bar{b})$ such that $(V',w')$ is $\bar{c}'$-good, and $T^0_{V,w}(\bar{d})=T^0_{V',w'}(\bar{c}')$. Then there is $\bar{c} \in B^{M^{<\omega}}_{w}(\bar{d})$ with $\tp(\bar{c})=\tp(\bar{c'})$, i.e., $\bar{c} \in B^{M^{<\omega}}_v(\bar{b})$, and $\bar{c}$ realizes $p_{k,l}(\bar{x}_{k,l})$.
	
For $\alpha>1$, this is an easy induction.
\end{proof}

Proposition \ref{pr:AET} combined with Theorem \ref{co:IsoAEAlpha} immediately gives a generalization of Theorem 4.5 from \cite{Ma}:

\begin{theorem}
\label{th:isoTalpha}
Let $F$ be a fragment, and let $T$ be an $F$-theory all of whose models are almost locally compact. Suppose that $[M] \in \Pi^0_{1+\alpha}(t_F)$, $\alpha \geq 1$, for some $M \in \Mod(T)$. Then $$[M]=\{N \in \Mod(T): T^{\alpha}(N)=T^{\alpha}(M)\}.$$
\end{theorem}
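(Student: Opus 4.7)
The plan is to establish both inclusions of the claimed equality
$$[M]=\{N \in \Mod(T): T^{\alpha}(N)=T^{\alpha}(M)\}.$$

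The forward inclusion is immediate from Remark \ref{re:CongSameT}: any $N \in [M]$ is isomorphic to $M$, so $T^{\alpha}(N)=T^{\alpha}(M)$.

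For the reverse inclusion, I would first apply Theorem \ref{co:IsoAEAlpha} to the hypothesis $[M] \in \Pi^0_{1+\alpha}(t_F)$, producing an $\alpha$-AE family $P_M$ with $[M]=\{N' \in \Mod(L): N' \text{ models } P_M\}$. In particular $M$ itself models $P_M$, i.e., the empty tuple in $M$ realizes $P_M$. Now fix $N \in \Mod(T)$ with $T^{\alpha}(N)=T^{\alpha}(M)$; since all models of $T$ are almost locally compact, both $M$ and $N$ qualify for Proposition \ref{pr:AET}. I would apply that proposition with $\bar a = \emptyset$ in $N$, $\bar a' = \emptyset$ in $M$, and $u=u'$ chosen large enough that $u \geq u_{P_M}$. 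The point is that $B^{M^{<\omega}}_u(\emptyset)=M^{<\omega}$ for every $u>0$, so the definition of $T^{\alpha}_u(\emptyset)$ is independent of the particular value of $u>0$ and coincides with $T^{\alpha}_1(\emptyset)$, which by definition is $T^{\alpha}(M)$ (or $T^{\alpha}(N)$). Hence the hypothesis $T^{\alpha}_u(\emptyset)=T^{\alpha}_{u'}(\emptyset)$ of Proposition \ref{pr:AET} is exactly our assumption $T^{\alpha}(N)=T^{\alpha}(M)$. The proposition then yields that $\emptyset$ in $N$ realizes $P_M$, i.e., $N$ models $P_M$, whence $N \in [M]$.

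I do not expect any substantial obstacle: the argument is a direct combination of Theorem \ref{co:IsoAEAlpha} and Proposition \ref{pr:AET}. The only piece of bookkeeping is the harmless freedom to enlarge $u$ beyond $u_{P_M}$, which is available precisely because the outermost ball around the empty tuple is the whole of $M^{<\omega}$.
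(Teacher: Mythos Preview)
Your proposal is correct and matches the paper's approach: the paper states that the theorem follows immediately from combining Theorem~\ref{co:IsoAEAlpha} with Proposition~\ref{pr:AET}, and you have spelled out exactly that combination, together with Remark~\ref{re:CongSameT} for the easy inclusion. The only cosmetic point is that in invoking Proposition~\ref{pr:AET} you are swapping the roles of $M$ and $N$ relative to its statement, which is of course harmless.
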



Now we are in a position to prove a generalization of Theorem 4.6 from \cite{Ma}, essentially with the same proof. The only place, where local compactness is assumed, is an application of Lemma 6.4 from \cite{HaMaTs}. The reader can easily verify that in fact this assumption is not needed there.

\begin{theorem}
\label{th:CtbleModels}
Let $F$ be a fragment, and let $T$ be an $F$-theory all of whose models are almost locally compact. Then $\cong_T$ is classifiable by countable structures.
\end{theorem}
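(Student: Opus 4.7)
The plan is to mimic the proof of Theorem 4.6 in \cite{Ma} verbatim, with Theorem \ref{th:isoTalpha} above replacing its locally compact precursor (Theorem 4.5 of \cite{Ma}). The strategy is to exhibit a Borel reduction from $\cong_T$ to isomorphism of countable structures by assigning to each $M \in \Mod(T)$ a countable structure $A_M$ that codes the transfinite family of invariants $T^\alpha(M)$.

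First, I would observe that $\Mod(T)$ is Borel in the logic topology $t_F$, since each axiom of $T$ is an $F$-condition cutting out a closed set. Hence every orbit $[M]$ for $M \in \Mod(T)$ is Borel, so by Vaught's theorem it lies in $\bPi^0_{1+\alpha}(t_F)$ for some countable $\alpha$ depending on $M$, and Theorem \ref{th:isoTalpha} then gives $[M] = \{N \in \Mod(T) : T^\alpha(N) = T^\alpha(M)\}$.

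Next, I would package the entire family $\{T^\beta_{U,\epsilon}(\bar{a}) : \bar{a} \in \Seq,\, U \in \UUU,\, \epsilon \in \QQ^+,\, \beta < \omega_1\}$ into a single hereditarily countable object coded by a countable structure $A_M$: the natural choice is the tree whose branching mirrors the recursive clause defining $T^\alpha_{U,\epsilon}$, with leaves labelled by the compact sets $T^0_{U,\epsilon}(\bar{a}) \in \KK(S_{|\bar{a}|}(T))$. Remark \ref{re:CongSameT} gives the forward implication $M \cong N \implies A_M \cong A_N$, while the converse follows by applying Theorem \ref{th:isoTalpha} at the $\alpha$ witnessing the Borel complexity of $[M]$.

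The main obstacle is to verify that $M \mapsto A_M$ is Borel. This boils down to showing that, for fixed $\bar{a}, U, \epsilon$, the predicate ``$(U,\epsilon)$ is $\bar{a}$-good in $M$'' is Borel in $M$, and that the assignment $M \mapsto T^0_{U,\epsilon}(\bar{a})$, valued in the standard Borel hyperspace of compact subsets of $S_{|\bar{a}|}(T)$, is likewise Borel. The former is handled by Lemma \ref{l:loc-cpct-type-space}(\ref{i:p:loc-cpct-type-space:2}); the latter is precisely where Lemma 6.4 of \cite{HaMaTs} enters in the original proof, and as the author points out, that lemma does not really require local compactness once one restricts attention to $\bar{b} \in \Seq$ with $\rho_M(\bar{b}) > 0$ via Remark \ref{re:Smallv}.
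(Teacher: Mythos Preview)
Your overall strategy is the same as the paper's, but as written there is a genuine gap in the construction of $A_M$. You propose to ``package the entire family $\{T^\beta_{U,\epsilon}(\bar{a}) : \beta < \omega_1\}$'' into a single countable structure. This family is not countable: for fixed $(\bar{a},U,\epsilon)$ the sets $T^\beta_{U,\epsilon}(\bar{a})$ have strictly increasing set-theoretic rank as $\beta$ grows, so they are pairwise distinct and there are $\omega_1$ of them. Nor can you fix a single $\alpha$ in advance, since the rank witnessing $[M]\in\bPi^0_{1+\alpha}(t_F)$ varies with $M$ and there is no uniform countable bound. The paper resolves this by encoding \emph{only} the ground-level data: the countable structure $C_M$ has universe the tuples $(\cl[\tau]{B_\epsilon(\tp(\bar{a}))\cap U},|\bar{a}|,U,\epsilon)$ for $\bar{a}\in\Seq$ with $(U,\epsilon)$ $\bar{a}$-good, together with the binary relation $E$ recording the clause ``$|\bar{a}'|\geq|\bar{a}|$, $U'\upharpoonright|\bar{a}|\subseteq U$, $\epsilon'\leq\epsilon$''. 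From this finite-level data every $T^\alpha_{U,\epsilon}(\bar{a})$ is recovered by transfinite recursion along $E$, so a single countable structure captures all ordinals at once.

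There is a second point you pass over. For the converse $A_M\cong A_N\Rightarrow T^\alpha(M)=T^\alpha(N)$, an arbitrary isomorphism of the coding structures need not respect the actual compact sets $T^0_{U,\epsilon}(\bar{a})$; it might permute elements with the same $(|\bar{a}|,U,\epsilon)$ but different closures. The paper handles this by adding unary relations $O_l$ recording, for each basic open $U_{l,n}\subseteq S_n(T)$, whether $U_{l,n}$ misses $\cl[\tau]{B_\epsilon(\tp(\bar{a}))\cap U}$. These relations pin down the first coordinate completely, forcing any isomorphism $\pi:C_M\to C_N$ to satisfy $\pi(x)=x$ on the nose, from which $T^\alpha(M)=T^\alpha(N)$ follows by induction along $E$. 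Your appeal to Remark~\ref{re:CongSameT} gives only the forward direction; the rigidity argument for the converse is a separate, necessary ingredient.
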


\begin{proof}
First, for a given $M \in \Mod(T)$, we construct a countable structure $C_M$, essentially, as in the proof of \cite{Hj}[Lemma 6.30]. Its universe consists of elements $x$ of the form $$x=(\cl[\tau]{B_\epsilon(\tp(\bar{a})) \cap U },|\bar{a}|,U,\epsilon),$$ where $\bar{a} \in \NN^{<\NN}$, $U \in \UUU_{|\bar{a}|}$, $\epsilon \in \QQ^+$, and $(U,\epsilon)$ is $\bar{a}$-good. The relevant information carried by these objects is recorded with an aid of the relations $O_l$, $R_{k,l,\delta}$, $k,l \in \NN$, $\delta \in \QQ^+$, and $E$, defined, for $x=(\cl[\tau]{B_\epsilon(\tp(\bar{a})) \cap U },|\bar{a}|,U,\epsilon)$, $x'=(\cl[\tau]{B_{\epsilon'}(\tp(\bar{a}')) \cap U' },|\bar{a}'|,U',\epsilon')$, as follows:

\begin{itemize}
	\item $O_l(x)$ iff $U_{l,|\bar{a}|} \cap \cl[\tau]{B_\epsilon(\tp(\bar{a})) \cap U }=\emptyset$,
	\item $R_{k,l,\delta}(x)$ iff $k=|\bar{a}|$, $U=U_{l,k}$, $\delta=\epsilon$,
	\item $x E x'$ iff $|\bar{a}'| \geq |\bar{a}|$, $U' \upharpoonright |\bar{a}| \subseteq U$,  $\epsilon' \leq \epsilon$.
\end{itemize}

By Remark \ref{re:good}(3), $M \cong N$ implies that $C_M=C_N$. On the other hand, as the relations $O_l$ record complements of sets $\cl[\tau]{B_\epsilon(\tp(\bar{a})) \cap U }$ in $S_{|\bar{a}|}(T)$, we have that $$\pi((\cl[\tau]{B_\epsilon(\tp(\bar{a})) \cap U },|\bar{a}|,U,\epsilon))=(\cl[\tau]{B_\epsilon(\tp(\bar{a})) \cap U },|\bar{a}|,U,\epsilon)$$ for any isomorphim $\pi:C_M \rightarrow C_N$.
It obviously follows that $T^0_{U,\epsilon}(\bar{a})=T^0_{U,\epsilon}(\bar{a'})$, if $\pi(x)=x'$. And $E$ warranties that $T^\alpha_{U,\epsilon}(\bar{a})=T^\alpha_{U,\epsilon}(\bar{a'})$ for $\alpha>0$. In particular, $T^\alpha(M)=T^\alpha(N)$ for $\alpha<\omega_1$.

It is not hard to construct a Borel mapping $\Mod(T) \rightarrow 2^\NN$, $M \mapsto D_M$, so that $D_M$ codes a countable structure isomorphic to $C_M$. First, by \cite[Lemma 6.4]{HaMaTs} (note that local compactness of $M$ is not relevant in the proof of Lemma 6.4(i)), the mappings
\[ \Mod(T) \times \NN^n \rightarrow \RR, \ (M,\bar{a}) \mapsto \rho_M(\bar{a}), \]
\[ \Mod(T) \times \NN^n \times \QQ^+ \rightarrow \mathcal{K}(S_n(T)), \]
\[ (M,\bar{a},r) \mapsto B_{\leq r}(\tp(\bar{a})) \mbox{ if } r<\rho_M (\bar{a}) \mbox{ and } \emptyset, \mbox{ otherwise, } \]
where $\mathcal{K}(X)$ is the standard Borel space of closed subsets of $X$, are Borel. Therefore the relation  ``$(U,\epsilon)$ is $\bar{a}$-good in $M$'', regarded as a subset of $\Mod(T) \times \NN^{<\NN} \times \UUU \times \QQ^+$, is also Borel. This gives rise to a Borel enumeration $e: \NN \rightarrow (\bigsqcup_n \KK(S_n(T))) \times \UUU \times \QQ^+$ of the universe of $C_M$. Using this $e$, we can easily construct the required Borel mapping $M \mapsto D_M$. 

By \cite[Corollary 5.6]{BeDoNiTs}, for every $M \in \Mod(T)$, the isomorphism class $[M]$ is Borel, i.e., $[M] \in \bPi^0_\alpha(t_F)$ for some $\alpha<\omega_1$. Hence, Theorem \ref{th:isoTalpha} implies that $M \cong N$ iff $D_M \cong D_N$. 
\end{proof}

Let $\UU$ be the Urysohn space, and let $\mathcal{ALC} \subseteq  \KK(\UU)$ be the set of all closed almost locally compact subspaces of $\UU$.

\begin{proposition}
	$\mathcal{ALC}$ is a Borel subset of $\KK(\UU)$.
\end{proposition}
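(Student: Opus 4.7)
The plan is to express membership in $\mathcal{ALC}$ as a countable Boolean combination of Borel conditions on $F \in \mathcal{K}(\UU)$, using that ``$F$ meets a given open set'' is Borel in the Effros--Borel structure while total boundedness of $F$ intersected with an open ball can be made Borel via Kuratowski--Ryll-Nardzewski selectors.

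First, I would establish the characterization: a closed $F \subseteq \UU$ is almost locally compact iff $L_F := \{x \in F : \rho_F(x) > 0\}$ is dense in $F$. The direction $(\Leftarrow)$ is immediate, since $L_F$ is open in $F$ by the triangle inequality (if $\cl[F]{B_r(x)}$ is compact and $y \in B_r(x) \cap F$, then $\cl[F]{B_{r-d(x,y)}(y)} \subseteq \cl[F]{B_r(x)}$, whence $\rho_F(y)>0$), and it is then locally compact on its own. For $(\Rightarrow)$, given a dense locally compact $L \subseteq F$ and a nonempty $F$-open $V$, I pick $y \in V \cap L$ with a compact $L$-neighborhood $K \subseteq L$, choose an $F$-open $W \ni y$ small enough that $W \cap L \subseteq K$, and use density of $L$ in $F$ to get $\cl[F]{W} = \cl[F]{W \cap L} \subseteq K$ (since $K$ is compact, hence $F$-closed), whence $\rho_F(y)>0$.

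Next, I would fix a countable dense set $\{d_m : m \in \NN\} \subseteq \UU$ and rewrite density of $L_F$ as the countable statement: for every $m \in \NN$ and $\epsilon \in \QQ^+$ with $F \cap B_\epsilon(d_m) \neq \emptyset$, there exist $n \in \NN$ and $s \in \QQ^+$ such that $B_s(d_n) \subseteq B_\epsilon(d_m)$, $F \cap B_s(d_n) \neq \emptyset$, and $F \cap B_s(d_n)$ is totally bounded. The equivalence is routine: a witness $y$ to density of $L_F$ inside $F \cap B_\epsilon(d_m)$ sits in a small ball $B_s(d_n)$ whose $F$-intersection is contained in a compact closed ball around $y$, while conversely total boundedness of $F \cap B_s(d_n)$ in the complete space $\UU$ yields compactness of its $F$-closure, hence $\rho_F(z)>0$ for every $z \in F \cap B_s(d_n)$.

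Finally, each atomic condition is Borel in $F$. The clauses ``$F \cap V \neq \emptyset$'' for open $V \subseteq \UU$ are Borel by definition of the Effros--Borel structure on $\mathcal{K}(\UU)$. For the total boundedness clause, I would invoke the Kuratowski--Ryll-Nardzewski selection theorem to fix Borel maps $\psi_k : \mathcal{K}(\UU) \to \UU$ with $\{\psi_k(F) : k \in \NN\}$ dense in $F$ for every nonempty $F$; then $\{\psi_k(F) : \psi_k(F) \in B_s(d_n)\}$ is dense in $F \cap B_s(d_n)$, so ``$F \cap B_s(d_n)$ is totally bounded'' is equivalent to the manifestly Borel statement
\[ \forall \delta \in \QQ^+ \, \exists N \in \NN \, \forall k \in \NN \, \bigl[\psi_k(F) \in B_s(d_n) \implies \exists i \leq N,\ d(\psi_k(F), \psi_i(F)) < \delta\bigr]. \]
Combining these yields Borelness of $\mathcal{ALC}$. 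The main obstacle is handling total boundedness, since intersections of closed sets are not in general Borel operations in the Effros--Borel framework; the selectors $\psi_k$ reduce the problem to countable Borel quantification over a Borel sample of $F$.
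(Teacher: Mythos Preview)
Your proof is correct and follows essentially the same approach as the paper: both use Kuratowski--Ryll-Nardzewski selectors to produce a Borel-in-$F$ dense sequence in $F$ and then express almost local compactness as a countable Boolean combination of Borel conditions on these selectors. The only difference is cosmetic: the paper quantifies entirely over the selectors and invokes Borelness of $K \mapsto \rho_K(\phi(K)_n)$ from \cite{HaMaTs}, whereas you quantify over a fixed dense set in $\UU$ and unpack that lemma by hand via your total-boundedness formula, making your argument slightly longer but more self-contained.
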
 

\begin{proof}
	By the Kuratowski--Ryll-Nardzewski theorem, there exists a Borel $\phi: \KK(\UU) \rightarrow \UU^\NN$ such that $\phi(K)$ is a dense sequence in $K$. It is straightforward to verify that each mapping $K \mapsto \rho_K(\phi(K)_n)$ is Borel (see the proof of Lemma 6.4 in \cite{HaMaTs} for more details). Thus, the condition for $K$ being almost locally compact can be formulated as
	\[ \forall m \forall q \in \QQ^+ \exists n (d(\phi(K)_m,\phi(K)_n)<q \mbox{ and } \rho_K(\phi(K)_n)>0),   \]
	which is clearly Borel.
\end{proof}

\begin{theorem}
\label{th:Isometry}
Isometry of locally compact Polish metric spaces is classifiable by countable structures.
\end{theorem}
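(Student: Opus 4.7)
The plan is to reduce isometry on $\mathcal{ALC}$ to isomorphism of countable structures via the machinery developed above. First, using the Kuratowski--Ryll-Nardzewski Borel selector $\phi \colon \KK(\UU) \to \UU^\NN$ (already employed in the preceding proposition), I would construct a Borel map $\Phi \colon \mathcal{ALC} \to \Mod(L)$, where $L$ is the pure-metric signature, sending each $K$ to the code $M_K \in \Mod(L)$ of the Polish metric structure on $K$ determined by the dense sequence $\phi(K)$. Two elements $K, K' \in \mathcal{ALC}$ are isometric in $\UU$ iff $M_K \cong M_{K'}$ as metric structures, and $M_K$ is almost locally compact whenever $K$ is.

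Next, I would invoke (the proof of) Theorem \ref{th:CtbleModels} with $L$ as above and $F$ a fixed countable fragment. Strictly speaking, the hypothesis that \emph{all} models of $T$ are almost locally compact is not met by the trivial theory in $\Mod(L)$. However, the proof of Theorem \ref{th:CtbleModels} uses this hypothesis only to invoke Theorem \ref{th:isoTalpha}, which in turn rests on Proposition \ref{pr:AET}; inspection shows that Proposition \ref{pr:AET} requires only the two specific models $M$ and $N$ being compared to be almost locally compact. Consequently, the Borel assignment $M \mapsto D_M$ is well-defined on the Borel set of almost locally compact structures in $\Mod(L)$, and for any almost locally compact $M, N \in \Mod(L)$ one has $M \cong N$ iff $D_M \cong D_N$. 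The forward direction follows from Remark \ref{re:CongSameT}; the backward direction goes by picking $\alpha$ with $[M] \in \bPi^0_{1+\alpha}(t_F)$ (using \cite[Corollary 5.6]{BeDoNiTs}), producing an $\alpha$-AE family $P_M$ via Theorem \ref{co:IsoAEAlpha}, and applying Proposition \ref{pr:AET} to transfer realizability of $P_M$ from $M$ to $N$.

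Composing, the Borel map $K \mapsto D_{M_K}$ is a Borel classification of isometry on $\mathcal{ALC}$ by countable structures. Since $\mathcal{LC} \subseteq \mathcal{ALC}$ and isometry on $\mathcal{ALC}$ restricts to isometry on $\mathcal{LC}$, this yields the desired classification in the sense demanded in the introduction, with $X = \mathcal{ALC}$ (standard Borel by the previous proposition) and $E$ the isometry relation on $\mathcal{ALC}$. The main obstacle is precisely the localization of the proof of Theorem \ref{th:CtbleModels} to individual almost locally compact structures just outlined, which ensures that even though the Borel encoding lands in an ambient space containing non-almost-locally-compact structures, the classification goes through; everything else is routine, being direct transfer from the already-established machinery.
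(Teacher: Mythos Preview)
Your argument is correct and arrives at the same conclusion, but the paper takes a more direct route that sidesteps your localization step. The paper observes that almost local compactness can itself be expressed by a sentence of $\IL(L)$; hence there is an $F$-theory $T$ whose models are exactly the almost locally compact metric structures, and Theorem~\ref{th:CtbleModels} applies as a black box. Your workaround---noting that Proposition~\ref{pr:AET}, and therefore the conclusion of Theorem~\ref{th:isoTalpha}, only requires the two specific models being compared to be almost locally compact---is valid, but becomes unnecessary once axiomatizability is noticed. The paper also makes explicit two routine points you leave implicit: the rescaling of the metric to a bounded one so that codes genuinely land in $\Mod(L)$, and the invocation of \cite[Theorem~13.1.2]{Gao} to transport classifiability along the Borel reduction $\mathcal{ALC} \to \Mod(T)$. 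In short, both approaches work; the paper's buys a clean one-line application of Theorem~\ref{th:CtbleModels} at the modest cost of the observation that almost local compactness is $\IL$-axiomatizable, while yours trades that observation for an inspection of the proof of Theorem~\ref{th:CtbleModels}.
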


\begin{proof}
Every almost locally compact Polish metric space $K$ can be coded as $M_K \in \Mod(L)$ with the trivial signature $L$, and metric bounded by $1$. Simply, pick a countable tail-dense subset of $K$, and replace the original metric $d$ with the metric $1/(1+d)$ which does not change the isometry relation. Actually, for $\mathcal{ALC} \subseteq \mathcal{K}(\UU)$, the coding $\mathcal{ALC} \rightarrow \Mod(L)$, $K \mapsto M_K$, can be defined in a Borel way: the Kuratowski--Ryll-Nardzewski theorem yields a Borel function $f:\mathcal{K}(\UU) \rightarrow \UU^\NN$ such that $f(K)$ is a tail-dense sequence in $K$. As the signature $L$ is trivial, the isomorphism relation is just the isometry relation. Moreover, the property of being almost locally compact can be expressed as a sentence in $\IL(L)$, so the set of all possible codes of almost locally compact Polish metric spaces is of the form $\Mod(T)$. By Theorem \ref{th:CtbleModels} and \cite[Theorem 13.1.2]{Gao}, the isometry relation on $\mathcal{ALC}$ is classifiable by countable structures.

Thus, the isometry relation on the (complete co-analytic) subspace $\mathcal{LC} \subseteq \mathcal{ALC}$ of locally compact Polish metric spaces is classifiable by countable structures.
\end{proof}

\end{document}